\theoremstyle{plain}
\newtheorem{thm}{Th\'eor\`eme}[section]
\newtheorem{pro}[thm]{Proposition}
\newtheorem{cor}[thm]{Corollaire}
\theoremstyle{definition}
\newtheorem{rem}[thm]{Remarque}
\def\og{\leavevmode\raise.3ex\hbox{$\scriptscriptstyle\langle\!\langle$~}}
\def\fg{\leavevmode\raise.3ex\hbox{~$\!\scriptscriptstyle\,\rangle\!\rangle$}}
\numberwithin{equation}{section}       
\title{Formes logarithmiques et feuilletages non dicritiques}
\author{Dominique \textsc{Cerveau}}
\address{Membre de l'Institut Universitaire de France.
IRMAR, UMR 6625 du CNRS, Universit\'e de Rennes $1$, $35042$ Rennes, France.}
\email{dominique.cerveau@univ-rennes1.fr}
\begin{document}
\selectlanguage{french}

\maketitle\begin{center}{\today}\end{center}

\begin{flushright}
{\sl \`A Xavier Gomez Mont}\\

{\sl jeune math\'ematicien \`a}\\

{\sl l'humour inclassable}
\end{flushright}

\begin{abstract}
Pour un feuilletage alg\'ebrique $\mathcal F$ de codimension 1 sur l'espace projectif $\mathbb P_{\mathbb C}^{n}$, sous des conditions raisonnables portant sur la nature des singularit\'es, le degr\'e des hypersurfaces alg\'ebriques invariantes est major\'e par $d+2$ o\`u $d$ est le degr\'e de $\mathcal F$ (Carnicer \cite{5}, Cerveau-Lins-Neto \cite{6}). On s'int\'eresse ici au cas extr\'emal o\`u le degr\'e d'une telle hypersurface est pr\'ecis\'ement $d+2$ compl\'etant en cel\`a des r\'esultats de Brunella \cite{1, 6}.

\noindent{\it 2010 Mathematics Subject Classification. --- 34M45, 37F75}
\end{abstract}

\tableofcontents

 \section{Introduction} 
 
 Soit $X$une vari\'et\'e complexe ; si $\omega$ est une $1$-forme diff\'erentielle m\'eromorphe sur $X$ on note $D=\mathrm{Pol}\,\omega$ son diviseur de p\^oles. On dit que $\omega$ est une forme logarithmique si $\omega$ et $d\omega$ sont \`a p\^oles simples le long de $D$. On sait qu'une $1$-forme holomorphe sur une vari\'et\'e projective, et plus g\'en\'eralement sur une vari\'et\'e k\"ahl\'erienne, est ferm\'ee. C'est une cons\'equence de la formule de Stockes. Le r\'esultat qui suit g\'en\'eralise ce fait ; il est du \`a P. Deligne :
   
\begin{thm}[\cite{7}] \label{thm:deligne}
Soient $X$ une vari\'et\'e projective complexe et $\omega$ une $1$-forme logarithmique \`a p\^oles le long du diviseur $D$. Si les singularit\'es de $D$ sont des croisements ordinaires, la forme $\omega$ est ferm\'ee.
\end{thm}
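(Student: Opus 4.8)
The plan is to combine a residue calculus along the components of $D$ with the Hodge theory of the compact Kähler variety $X$. Since $D$ has only normal crossings, I would first record the local normal form of a logarithmic $1$-form: near any point, in coordinates for which $D = \{z_1\cdots z_k = 0\}$, the hypothesis that $\omega$ and $d\omega$ have at most simple poles along $D$ forces
$$\omega = \sum_{i=1}^{k} g_i\,\frac{dz_i}{z_i} + \eta,$$
with $g_i$ and $\eta$ holomorphic. This is the usual Poincar\'e-type description of the sheaf $\Omega^1_X(\log D)$, and it makes the residue $\mathrm{Res}_{D_i}\omega := g_i|_{D_i}$ a well-defined holomorphic function on each component $D_i$. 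Because $D$ is a normal crossings divisor, each $D_i$ is smooth and, being projective, compact; hence $\mathrm{Res}_{D_i}\omega$ is constant on every connected component of $D_i$, say equal to $\lambda_i$.

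The second step is to transport this to $d\omega$. A direct local computation with the normal form above shows that the residue map anticommutes with exterior differentiation, $\mathrm{Res}_{D_i}(d\omega) = -\,d\big(\mathrm{Res}_{D_i}\omega\big)$, the right-hand side being the differential along $D_i$ of the function $g_i|_{D_i}$. Since each $\mathrm{Res}_{D_i}\omega = \lambda_i$ is constant, every residue of the logarithmic $2$-form $d\omega$ vanishes. A logarithmic form all of whose residues are zero is holomorphic, so $\Theta := d\omega$ extends to a global holomorphic $2$-form on the whole of $X$.

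It remains to prove $\Theta = 0$, and here the compact Kähler hypothesis enters decisively. On a compact Kähler manifold every holomorphic form is harmonic and therefore determines a de Rham class; in particular $[\Theta] \in H^2(X,\mathbb{C})$ is of Hodge type $(2,0)$. On the complement $U = X \setminus D$ we have $\Theta|_U = d(\omega|_U)$, so the restriction $j^*[\Theta] \in H^2(U,\mathbb{C})$ vanishes, where $j\colon U \hookrightarrow X$. I would finish by arguing that $j^*$ is injective on the $(2,0)$-part: the kernel of $H^2(X) \to H^2(U)$ is generated by the Gysin images of the classes carried by the components $D_i$, which raise Hodge type by $(1,1)$ and hence meet $H^{2,0}(X)$ only in $0$ (this is the weight estimate for the mixed Hodge structure on $H^\bullet(U)$). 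Consequently $[\Theta] = 0$ in $H^2(X)$, and a harmonic form representing the zero class is zero; thus $d\omega = \Theta = 0$.

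The main obstacle is precisely this last Hodge-theoretic input: showing that a holomorphic form which becomes exact after removing $D$ must already vanish on $X$. Everything before it is formal residue bookkeeping valid on any complex manifold, and it is only at this stage that projectivity (or more generally the Kähler condition) together with the normal-crossings hypothesis are genuinely used. This is, of course, the heart of Deligne's argument, which may alternatively be phrased as the $E_1$-degeneration of the Hodge--de Rham spectral sequence of the logarithmic complex $\Omega^\bullet_X(\log D)$, the differential $d_1\colon H^0(X,\Omega^1(\log D)) \to H^0(X,\Omega^2(\log D))$ being exactly the map $\omega \mapsto d\omega$.
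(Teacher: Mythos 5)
The paper does not actually prove this statement: it is imported verbatim from Deligne's \emph{Th\'eorie de Hodge II} (reference \cite{7}) and used as a black box in Section 3, so there is no internal proof to compare yours against. That said, your argument is correct and is essentially the standard proof of this special case. The two steps are sound: (i) the local normal form $\omega=\sum_i g_i\,dz_i/z_i+\eta$ does follow from the simple-pole condition on both $\omega$ and $d\omega$ when $D$ has normal crossings, the residues $g_i|_{D_i}$ are then constants by compactness of the smooth components $D_i$, and the computation $\mathrm{Res}_{D_i}(d\omega)=\pm\,d(g_i|_{D_i})=0$ together with the exactness of $0\to\Omega^2_X\to\Omega^2_X(\log D)\to\bigoplus_i\Omega^1_{D_i}(\log)$ shows that $d\omega$ is a global holomorphic $2$-form $\Theta$; (ii) since $\Theta|_{X\setminus D}$ is exact and the kernel of $H^2(X,\mathbb C)\to H^2(X\setminus D,\mathbb C)$ is spanned (via the Gysin sequence) by the algebraic classes $[D_i]$, which are of type $(1,1)$, the $(2,0)$-class $[\Theta]$ vanishes, whence $\Theta=0$ because holomorphic forms on a compact K\"ahler manifold are harmonic. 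The only point where a referee might ask for more detail is the identification of that kernel with the span of the $[D_i]$; if you want to avoid the mixed-Hodge-theoretic input altogether, the classical alternative is to run the Stokes argument for ``holomorphic forms on compact K\"ahler manifolds are closed'' on the complement of a shrinking tube around $D$, i.e.\ to show $\int_X\Theta\wedge\overline{\Theta}\wedge\kappa^{\,n-2}=0$ by writing $\Theta=d\omega$ off $D$ and checking that the boundary integrals tend to $0$ precisely because $\omega$ has only simple poles. Either route is legitimate; note also that the paper's remark following the theorem (reduction to general $2$-dimensional hyperplane sections) is consistent with your observation that the genuinely global input is concentrated in step (ii).
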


On peut en fait all\'eger les hypoth\`eses puisqu'une $1$-forme m\'eromorphe sur $X$ est ferm\'ee d\`es qu'elle l'est en restriction \`a toute section hyperplane g\'en\'erale de dimension au moins 2. De sorte qu'il suffit de supposer que les singularit\'es de $D$ sont des croisements ordinaires en dehors d'un ensemble de codimension 3 de $X$. En un certain sens le Th\'eor\`eme \ref{thm:deligne} est de nature 2-dimensionnelle.

Comme l'ont remarqu\'e plusieurs auteurs, ce r\'esultat est directement li\'e au probl\`eme de l'estimation du degr\'e des hypersurfaces invariantes des feuilletages de codimension $1$ sur les vari\'et\'es projectives. Les premiers r\'esultats en ce sens sont dus \`a Carnicer \cite{5} et Cerveau-Lins Neto \cite{6} dans le cadre des feuilletages alg\'ebriques du plan. Le th\'eor\`eme de Carnicer n\'ecessite des hypoth\`eses sur la nature des points singuliers du feuilletage: 

\begin{thm}[\cite{5}] \label{thm:inegalite}
Soit $\mathcal F$ un feuilletage de degr\'e $d$, de codimension 1 sur l'espace projectif $\mathbb P_{\mathbb C}^{2}$. On suppose que $\mathcal F$ poss\`ede une courbe alg\'ebrique invariante $S$ de degr\'e $m$. Si les points singuliers de $\mathcal F$ situ\'es sur $S$ sont non dicritiques alors $m\leq d+2$.
\end{thm}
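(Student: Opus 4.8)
The plan is to reinterpret the bound $m\le d+2$ as a positivity statement for an index supported on the singular points of $\mathcal F$ lying on $S$, and to use the non-dicritical hypothesis precisely to guarantee that positivity. First I would pass to the homogeneous picture: lift $\mathcal F$ to a $1$-form $\Omega=A\,dx+B\,dy+C\,dz$ on $\mathbb C^3$ with $A,B,C$ homogeneous of degree $d+1$ and $xA+yB+zC=0$, and write $S=\{F=0\}$ with $F$ homogeneous of degree $m$; invariance reads $\Omega\wedge dF=F\,\Theta$. The normal bundle of $\mathcal F$ is $N_{\mathcal F}=\mathcal O(d+2)$ and the normal bundle of $S$ is $\mathcal O_S(m)$, so $N_{\mathcal F}\cdot S=(d+2)m$ and $S\cdot S=m^2$. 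Note already that $\Omega/F$ has homogeneous coefficients of degree $(d+1)-m$, so it descends to a genuine meromorphic $1$-form on $\mathbb P^2$ exactly when $m=d+2$; this is the extremal object the paper will study, and it suggests organising the proof around the logarithmic residue of $\Omega/F$ along $S$.

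The engine is the global index formula for an invariant curve:
\[
\sum_{p\in \mathrm{Sing}(\mathcal F)\cap S}\mathrm{GSV}(\mathcal F,S,p)\;=\;N_{\mathcal F}\cdot S-S\cdot S\;=\;(d+2)m-m^2\;=\;m\,(d+2-m).
\]
This identity reflects the fact that, away from singularities, invariance identifies $N_{\mathcal F}|_S$ with $N_S$, so the defect between the two line bundles is concentrated at the singular points and is measured there by the G\'omez-Mont--Seade--Verjovsky index. Since $m>0$, the desired inequality $m\le d+2$ is therefore equivalent to
\[
\sum_{p}\mathrm{GSV}(\mathcal F,S,p)\;\ge\;0,
\]
so the whole theorem reduces to a local, pointwise statement: each $\mathrm{GSV}(\mathcal F,S,p)$ is non-negative.

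To establish the local non-negativity I would use reduction of singularities. Applying Seidenberg's theorem along $S$ produces $\pi\colon M\to\mathbb P^2$ for which $\widetilde{\mathcal F}=\pi^*\mathcal F$ has only reduced (simple) singularities, with $\widetilde S$ the strict transform of $S$. Here the non-dicritical hypothesis enters decisively: at a non-dicritical singularity every exceptional component created in the resolution is invariant by $\widetilde{\mathcal F}$, so that $\widetilde S$ together with the exceptional divisor is an invariant curve and, after finitely many further blow-ups, has only normal crossings. One then checks that the GSV index is compatible with this process and that at a reduced singularity carrying the invariant branch $\widetilde S$ it is non-negative, computed from the Camacho--Sad eigenvalue data; transporting this back through $\pi$, together with the blow-up behaviour of self-intersections, yields $\mathrm{GSV}(\mathcal F,S,p)\ge 0$ downstairs. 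Conceptually this is where the paper's logarithmic viewpoint is natural: on $M$ the form associated to $\Omega/F$ has simple poles along the invariant normal-crossing divisor, it is closed by Deligne's Theorem \ref{thm:deligne}, and its residues along the components of $\widetilde S$ are exactly the non-negative local contributions; non-dicriticity is what forbids a residue from ``escaping'' into a dicritical component and spoiling the sign.

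The main obstacle is precisely this local non-negativity at non-dicritical points, and it is genuinely the heart of the matter. Two features make it delicate: first, the given curve $S$ need not contain all local separatrices of $\mathcal F$ at $p$, so one must show the index does not become negative when $S$ is only part of the separatrix configuration; second, $S$ itself may be singular at $p$ (several branches), which forces one to run the reduction for the pair $(\mathcal F,S)$ simultaneously and to keep track of how both the index and the intersection numbers transform under each blow-up. Controlling the signs uniformly through the whole resolution tree --- and verifying that dicriticity is the \emph{only} way the argument can fail --- is the step I expect to require the most care; everything else is bookkeeping with the index formula above.
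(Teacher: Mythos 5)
First, a point of comparison: the paper does not prove this statement --- it is quoted from Carnicer \cite{5} as background --- so there is no internal proof to measure you against. Your route (the global index formula $\sum_{p}\mathrm{GSV}(\mathcal F,S,p)=N_{\mathcal F}\cdot S-S\cdot S=m(d+2-m)$, followed by local non-negativity of the GSV index at non-dicritical singularities) is not Carnicer's original argument, which runs the resolution directly with multiplicity estimates; it is essentially the index-theoretic proof given by Brunella in \cite{1}, the very reference the paper cites for the extremal case. The reduction of the theorem to $\sum_{p}\mathrm{GSV}(\mathcal F,S,p)\ge 0$ is correct, since $m>0$.

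The gap is that after this reduction the entire content of the theorem sits in the local statement $\mathrm{GSV}(\mathcal F,S,p)\ge 0$ at a non-dicritical singularity, for an invariant curve germ that may have several branches and need not exhaust the separatrix set --- and you do not prove it; you describe a strategy and correctly flag it as the hard step. To close it one needs (i) the exact transformation law of the GSV index under a single blow-up, involving the algebraic multiplicities $\nu(\mathcal F,p)$ and $\nu(S,p)$ and the self-intersection correction, and (ii) the inequality $\nu(S,p)\le\nu(\mathcal F,p)+1$ of Camacho--Lins Neto--Sad \cite{3}, valid precisely in the non-dicritical case (it is the same inequality the paper uses in Proposition \ref{pro:pro2}); this is where non-dicriticity enters quantitatively, and your sketch never isolates it, so the induction through the resolution tree does not actually close. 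Your own worry about $S$ containing only part of the separatrix configuration is well founded: the identity $\mathrm{GSV}(S_1\cup S_2)=\mathrm{GSV}(S_1)+\mathrm{GSV}(S_2)-2(S_1,S_2)_p$ shows the index is not monotone under adding or deleting branches, so this must be handled inside the induction rather than afterwards. Finally, the appeal to Deligne's theorem in the middle of the argument contradicts your own earlier observation: for $m<d+2$ the form $\Omega/F$ does not descend to a meromorphic $1$-form on $\mathbb P^2_{\mathbb C}$, so there is no global logarithmic form whose residues could carry the local contributions; that paragraph plays no role in the inequality and should be removed.
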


\`A l'inverse dans ce qui suit les hypoth\`eses portent sur les points singuliers de $S$ et non sur ceux de $\mathcal F$: 

\begin{thm}[\cite{6}]\label{thm:egalite}
Soit $\mathcal F$ un feuilletage de degr\'e $d$ du plan $\mathbb P_{\mathbb C}^{2}$ poss\'edant une courbe alg\'ebrique invariante~$S$ de degr\'e $m$. Si les singularit\'es de $S$ sont des croisements ordinaires alors $m\leq d+2$. Lorsque l'\'egalit\'e est r\'ealis\'ee, $m=d+2$, le feuilletage $\mathcal F$ est d\'efini par une forme ferm\'ee logarithmique.
\end{thm}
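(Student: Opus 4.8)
The plan is to read off both assertions from a single global object: a meromorphic $1$-form defining $\mathcal F$ that is logarithmic along $S$, with the degree of its twist encoding the inequality. Write $\mathcal F$ as a section $\Omega\in H^0(\mathbb P^2_{\mathbb C},\Omega^1_{\mathbb P^2}(d+2))$ (equivalently a homogeneous $1$-form $\Omega=A\,dx+B\,dy+C\,dz$ of degree $d+1$ with $xA+yB+zC=0$), and $S=\{F=0\}$ with $F$ reduced homogeneous of degree $m$. The invariance of $S$ reads $dF\wedge\Omega=F\,\Theta$ for a holomorphic $2$-form $\Theta$. First I would introduce $\eta:=\Omega/F$, a nonzero section of $\Omega^1_{\mathbb P^2}(d+2-m)$ whose polar locus is contained in $S$.

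Next I would check that $\eta$ is logarithmic along $S$. Since $F$ is reduced, $\eta$ has poles of order at most $1$; and the invariance identity gives
\[
d\eta=d(\Omega/F)=\frac{d\Omega-\Theta}{F},
\]
so $d\eta$ also has poles of order at most $1$. Because $S$ has only ordinary crossings, this order-one condition on $\eta$ and $d\eta$ is exactly the characterization of membership in $\Omega^1_{\mathbb P^2}(\log S)$. Hence $\eta\in H^0(\mathbb P^2,\Omega^1_{\mathbb P^2}(\log S)(d+2-m))$. Note that one cannot simply invoke Carnicer's Theorem~\ref{thm:inegalite} for the bound, since an ordinary crossing of $S$ need not be a non-dicritical point of $\mathcal F$ (e.g.\ the radial foliation $x\,dy-y\,dx$ has the ordinary crossing $\{xy=0\}$ as invariant curve but is dicritical); a direct argument is therefore required.

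For the inequality, set $k=d+2-m$ and suppose $k<0$. I would use the residue exact sequence for the ordinary-crossing divisor $S$, twisted by $\mathcal O(k)$:
\[
0\longrightarrow \Omega^1_{\mathbb P^2}(k)\longrightarrow \Omega^1_{\mathbb P^2}(\log S)(k)\xrightarrow{\ \mathrm{Res}\ }\mathcal R(k)\longrightarrow 0,
\]
where $\mathcal R$ is the residue sheaf supported on (the normalizations of) the components $S_i$. On the one hand $H^0(\Omega^1_{\mathbb P^2}(k))=0$ for $k<0$ by the Bott formula; on the other hand $\mathcal O(k)$ restricts to a line bundle of degree $k\deg S_i<0$ on each smooth component, so $H^0(\mathcal R(k))=0$. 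The associated long exact sequence then forces $H^0(\Omega^1_{\mathbb P^2}(\log S)(k))=0$, contradicting $\eta\neq0$. Hence $k\ge0$, that is $m\le d+2$.

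In the extremal case $m=d+2$ the twist is trivial, so $\eta\in H^0(\mathbb P^2,\Omega^1_{\mathbb P^2}(\log S))$ is a genuine projective logarithmic $1$-form whose polar divisor $S$ has only ordinary crossings; Deligne's Theorem~\ref{thm:deligne} applies and yields $d\eta=0$. Since $\eta\wedge\Omega=(\Omega\wedge\Omega)/F=0$ and $\eta\neq0$, the form $\eta$ defines the same foliation as $\Omega$, so $\mathcal F$ is given by the closed logarithmic form $\eta$, as claimed. I expect the main obstacle to be the careful local analysis at the singular points of $S$: showing that $\eta$ truly lies in $\Omega^1_{\mathbb P^2}(\log S)$ at an ordinary crossing (rather than merely having first-order poles in each branch), and pinning down the residue sheaf $\mathcal R$ precisely enough there for the vanishing argument — the crossings being exactly where the residues of the two local branches interact.
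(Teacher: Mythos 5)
Your argument is correct, but note first that the paper offers no proof of this statement: it is quoted from Cerveau--Lins Neto \cite{6}, and the only argument actually carried out in the text is that of the main theorem in Section 3 (the non-dicritical analogue). Your treatment of the equality case coincides with the mechanism the paper uses there and singles out in its introduction: $\eta=\Omega/F$ is logarithmic with polar divisor $S$ (your computation $d\eta=(d\Omega-\Theta)/F$ is exactly Proposition \ref{pro:pro1}), and since $S$ has only ordinary crossings Deligne's Th\'eor\`eme \ref{thm:deligne} applies to $\eta$ directly, whereas in the paper's main theorem one must first pull back by a resolution of the singularities of $\mathcal F$ and invoke Proposition \ref{pro:pro3} to restore the normal-crossing hypothesis. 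Where you genuinely diverge is on the inequality $m\le d+2$: the original reference obtains it by a more hands-on analysis along $S$, while your twisted residue sequence $0\to\Omega^1_{\mathbb P^2}(k)\to\Omega^1_{\mathbb P^2}(\log S)(k)\to\bigoplus_i\nu_{i*}\mathcal O_{\widetilde{S}_i}(k)\to 0$ combined with Bott vanishing and $\deg \nu_i^*\mathcal O(k)=k\deg S_i<0$ is essentially the cohomological method of Brunella--Mendes \cite{2}, which the paper cites precisely as the generalization to Pfaff fields on projective manifolds with Picard group $\mathbb Z$; this route buys brevity and generality at the cost of the two verifications you rightly flag, namely that at a node the condition that $\eta$ and $d\eta$ have at most simple poles is equivalent to membership in the locally free sheaf $\Omega^1_{\mathbb P^2}(\log S)$ (a standard fact from \cite{7}), and that for a nodal curve whose components may themselves be singular the residue map takes values in the structure sheaves of the normalizations. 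Both are routine; your aside that Carnicer's Th\'eor\`eme \ref{thm:inegalite} cannot simply be invoked is apt, since a node of $S$ may perfectly well be a dicritical singularity of $\mathcal F$ (the radial example you give), and the final identification of the foliations defined by $\eta$ and $\Omega$ is immediate. So the proposal is complete modulo these standard checks, and for the equality case it is the same proof the paper has in mind.
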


C'est la derni\`ere partie de l'\'enonc\'e qui est directement reli\'ee \`a celui de Deligne. Le Th\'eor\`eme \ref{thm:egalite} se g\'en\'eralise stricto sensu aux feuilletages de codimension $1$ sur $\mathbb P_{\mathbb C}^{n}$ ayant une hypersurface invariante \`a croisements ordinaires. Mieux Brunella et Mend\`es \'etablissent dans \cite{2} un r\'esultat plus g\'en\'eral concernant les champs d'hyperplans (\`a priori non n\'ecessairement int\'egrables) ayant encore une hypersurface \`a croisements normaux et ce sur les vari\'et\'es projectives ayant $\mathbb Z$ comme groupe de Picard.

Dans cet article on pr\'ecise le Th\'eor\`eme \ref{thm:inegalite} dans le cas extr\'emal o\`u l'in\'egalit\'e est une \'egalit\'e : $m=d+2$ ;  pour cel\`a on relie les concepts de non dicriticit\'e et de formes logarithmiques (Propositions \ref{pro:pro1}, \ref{pro:pro2}, \ref{pro:pro3}). Dans \cite[Proposition 10]{1} Brunella pr\'esente un r\'esultat similaire en utilisant des arguments d'indice de champs de vecteurs.

\begin{thm} 
Soit $\mathcal F$ un feuilletage de degr\'e $d$ sur l'espace projectif $\mathbb P_{\mathbb C}^{2}$ poss\'edant une courbe alg\'ebrique invariante $S$ de degr\'e pr\'ecis\'ement $d+2$. Si les points singuliers de $\mathcal F$ sur $S$ sont non dicritiques, alors $\mathcal F$ est donn\'e par une forme ferm\'ee logarithmique \`a p\^oles le long de $S$.
\end{thm}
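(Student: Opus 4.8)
The plan is to exhibit $\mathcal F$ directly as the foliation defined by $\eta := \Omega/F$, where $\Omega = A_0\,dz_0 + A_1\,dz_1 + A_2\,dz_2$ is a homogeneous $1$-form of degree $d+1$ defining $\mathcal F$ (so $i_R\Omega = 0$ for the radial field $R$, with $\gcd(A_0,A_1,A_2)=1$), and $F$ is the reduced equation of $S$, homogeneous of degree $d+2$; write $F = \prod_i F_i$ with $F_i$ irreducible and $S_i = \{F_i = 0\}$. The first point I would check is that the extremal hypothesis $\deg S = d+2$ is exactly what makes $\eta$ a genuine $1$-form on $\mathbb P^2$: since $L_R\Omega = (d+2)\Omega$ and $L_R F = (d+2)F$, the quotient $\eta = \Omega/F$ is homogeneous of degree $0$ and satisfies $i_R\eta = 0$, hence descends to a rational $1$-form on $\mathbb P^2$. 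Its polar divisor is exactly $S$, with simple poles, because $\mathcal F$ has isolated singularities and so $\Omega$ does not vanish along any $S_i$. Moreover the invariance of $S$, which in an affine chart reads $f\mid df\wedge\omega$, is precisely the condition that $f\,d\eta$ be holomorphic; as $f\eta = \omega$ is holomorphic as well, $\eta$ is automatically a logarithmic $1$-form with poles along $S$, and it defines $\mathcal F$. Everything therefore reduces to the single fact that $d\eta = 0$.

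To prove $d\eta = 0$ I would reduce to Deligne's Theorem \ref{thm:deligne} by a logarithmic resolution. Observe first that every singular point of $S$ is a singular point of $\mathcal F$ (at a regular point $\mathcal F$ is a trivial fibration and its invariant curves are smooth), so all singular points of $S$ are non-dicritical by hypothesis. Let $\pi : M \to \mathbb P^2$ be a composition of blow-ups, centered at the successive singular points of the total transform of $S$, producing a normal crossings divisor $\widetilde S \cup E$, where $\widetilde S$ is the strict transform and $E = \bigcup_j E_j$ the exceptional divisor. Since non-dicriticity is preserved under blow-up and a non-dicritical singularity produces only invariant exceptional components, every $E_j$ is invariant by $\pi^*\mathcal F$. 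The crucial step is then that $\pi^*\eta$ is again a logarithmic $1$-form, with poles along the normal crossings divisor $\widetilde S\cup E$: along $\widetilde S$ this is immediate, and along each invariant $E_j$ it follows from the local study of $\Omega/F$ under blow-up of a non-dicritical singularity (Propositions \ref{pro:pro1}, \ref{pro:pro2} and \ref{pro:pro3}). Granting this, $M$ is projective and $\widetilde S \cup E$ has ordinary crossings, so Theorem \ref{thm:deligne} yields $d(\pi^*\eta) = \pi^*(d\eta) = 0$; as $\pi$ is birational, $d\eta = 0$, and $\mathcal F$ is defined by the closed logarithmic form $\eta$ with poles along $S$.

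The heart of the argument, and the only place where non-dicriticity is essential, is the local claim that $\pi^*\eta$ stays logarithmic along the exceptional divisors. Two things can a priori go wrong at a singular point $p\in S$: the pullback $\pi^*\eta$ could acquire a pole of order $\ge 2$ along some $E_j$, or a pole along an exceptional component that is not invariant. Writing $\pi^*\Omega = u^{\ell}\,\widetilde\Omega$ and $\pi^*F = u^{m_p}\,\widetilde F$ near a first exceptional divisor $E = \{u=0\}$, one has $\ell = \nu_p(\mathcal F)$ with $E$ invariant when the singularity is non-dicritical, and $\ell = \nu_p(\mathcal F)+1$ with $E$ dicritical otherwise, while the pole order of $\pi^*\eta$ along $E$ is $m_p - \ell$. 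The main obstacle is to bound these orders and to show that the residue of $\pi^*\eta$ along each invariant $E_j$ is a closed, constant-coefficient logarithmic $1$-form on $E_j\cong\mathbb P^1$, since that is exactly the compatibility needed for $\pi^*\eta$ to be logarithmic and for Deligne's theorem to apply. This is precisely what fails in the dicritical case: along a non-invariant exceptional $\mathbb P^1$ the would-be residue $\widetilde\Omega|_{E}$ is not closed, so no closed logarithmic model exists, which is the dynamical reason the hypothesis cannot be dropped. Equivalently, and this is how I would ultimately organize the computation, it suffices to prove that each residue $\mathrm{Res}_{S_i}\eta$ extends to a holomorphic $1$-form on the normalization of $S_i$; holomorphic $1$-forms on a compact Riemann surface being closed, one then gets $\mathrm{Res}_{S_i}(d\eta) = -d(\mathrm{Res}_{S_i}\eta) = 0$ for every $i$, and hence $d\eta = 0$ because a logarithmic $2$-form on $\mathbb P^2$ is determined by its residues. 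The non-dicriticity of $\mathcal F$ at the singular points of $S$ is exactly the input guaranteeing this holomorphic extension.
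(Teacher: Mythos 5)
Your argument is correct and follows essentially the same route as the paper: descend $\omega/f$ to a logarithmic form on $\mathbb P^2$ using the Euler identity and the degree hypothesis $\deg S=d+2$, pull it back by a resolution, use non-dicriticity (via Propositions \ref{pro:pro1}--\ref{pro:pro3}) to keep it logarithmic with normal-crossing poles, and conclude by Deligne's Theorem \ref{thm:deligne}. The only (immaterial) deviation is that you resolve the curve $S$ directly where the paper takes the reduction of singularities of $\mathcal F$, and your closing paragraph sketches an alternative residue-theoretic bookkeeping that the paper does not need.
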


On adapte ensuite cet \'enonc\'e aux dimensions sup\'erieures et on donne quelques applications.

\section{Formes logarithmiques et r\'esolution des singularit\'es}

Soit $\mathcal F$ un germe de feuilletage singulier \`a l'origine de $\mathbb C^{2}$ ; on note $\omega=A\,dx+ B\, dy$ un germe de $1$-forme \`a singularit\'e isol\'ee en 0 d\'efinissant $\mathcal F$. Un tel $\omega$ est d\'efini \`a unit\'e multiplicative pr\`es. Par d\'efinition la \textbf{multiplicit\'e alg\'ebrique} ou l'\textbf{ordre de $\mathcal F$ en $0$} est l'entier
$$\nu(\mathcal F)=\nu(\omega)=\inf (\nu(A), \nu(B))$$
o\`u $\nu(A)$ et $\nu(B)$ d\'esignent les ordres des fonctions holomorphes $A$ et $B$ en $0$. Soit $S$ un germe de courbe d'\'equation r\'eduite $f=0$ \`a l'origine de $\mathbb C^{2}$. On dit que $S$ est une \textbf{s\'eparatrice} ou une \textbf{courbe invariante} de $\mathcal F$ si $S\smallsetminus\{0\}$ est une feuille (au sens ordinaire) du feuilletage r\'egulier $\mathcal F|_{\mathbb C^2\smallsetminus\{0\},0}$. Ceci se traduit en termes alg\'ebriques par : la $2$-forme $\omega\wedge df$ est divisible par $f$, \emph{i.e.} s'annule identiquement sur $S$.

Le germe de feuilletage $\mathcal F$ est dit \textbf{non dicritique} s'il ne poss\`ede qu'un nombre fini de s\'eparatrices. Il en poss\`ede d'ailleurs au moins une d'apr\`es un \'enonc\'e c\'el\`ebre de Camacho et Sad \cite{4}. Lorsque $\mathcal F$ poss\`ede une infinit\'e de s\'eparatrices il est donc dit \textbf{dicritique}. Cette notion de dicriticit\'e s'interpr\`ete en termes de r\'eduction des singularit\'es. Soit $\pi \colon\widetilde{\mathbb C^2}\rightarrow \mathbb C^2,0$ une r\'eduction des singularit\'es de $\mathcal F$ ; alors $\mathcal F$ est non dicritique si et seulement si chaque composante du diviseur exceptionnel $\pi^{-1}(0)$ est invariante par le  feuilletage transform\'e strict $\pi^{-1}(\mathcal F)$ de $\mathcal F$ par $\pi$. On rappelle qu'en dimension $2$ toutes les r\'eductions (minimales) sont isomorphes de sorte que le discours qui pr\'ec\`ede est ind\'ependant du choix de la r\'eduction. La notion de forme logarithmique se localise sans probl\`eme : le germe de $1$-forme m\'eromorphe $\Omega$ \`a l'origine de $\mathbb C^2$ est \textbf{logarithmique} si $\Omega$ et $d\Omega$ sont \`a p\^oles simples. L'\'enonc\'e qui suit est \'el\'ementaire:

\begin{pro} \label{pro:pro1}
Soient $\mathcal F$ un germe de feuilletage \`a l'origine de $\mathbb C^2$ d\'efini par la $1$-forme holomorphe $\omega$ et~$S$ une courbe invariante $($pas n\'ecessairement irr\'eductible$)$ de $\mathcal F$, d'\'equation r\'eduite $f=0$. Alors la forme m\'eromorphe $\Omega=\omega/f$ est logarithmique.
\end{pro}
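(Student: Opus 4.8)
The plan is to reduce the statement to a single exterior-derivative computation, using the algebraic characterization of invariance recalled just above. First I would observe that the pole order of $\Omega=\omega/f$ itself is immediate: since $\omega$ is holomorphic and $f$ is a reduced equation of $S$, the product $f\,\Omega=\omega$ is holomorphic, so $\Omega$ has at most a simple pole along $D=\{f=0\}$. It remains to control the pole order of $d\Omega$, and this is exactly where the invariance hypothesis must enter.

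Next I would compute
\[
d\Omega=d\!\left(\frac{\omega}{f}\right)=\frac{d\omega}{f}-\frac{df\wedge\omega}{f^{2}}=\frac{d\omega}{f}+\frac{\omega\wedge df}{f^{2}}.
\]
The first summand is harmless, having a simple pole because $d\omega$ is holomorphic; the only potentially dangerous term is $\dfrac{\omega\wedge df}{f^{2}}$, which a priori carries a double pole along $D$. To gain one order I would invoke the invariance of $S$: as recalled in the preceding paragraph, $S$ being a separatrix of $\mathcal F$ means exactly that the holomorphic $2$-form $\omega\wedge df$ is divisible by $f$, say $\omega\wedge df=f\,\beta$ with $\beta$ a germ of holomorphic $2$-form. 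Substituting gives
\[
d\Omega=\frac{d\omega}{f}+\frac{\beta}{f}=\frac{d\omega+\beta}{f},
\]
so that $f\,d\Omega=d\omega+\beta$ is holomorphic and $d\Omega$ has at most a simple pole. Together with the first step this shows that $\Omega$ and $d\Omega$ both have simple poles, i.e. $\Omega$ is logarithmic.

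Since the computation is essentially forced, there is no serious analytic obstacle here; the one point that deserves care is the \emph{exactness} of the divisibility $f\mid\omega\wedge df$ when $S$ is reducible. Writing $f=f_{1}\cdots f_{k}$ as a product of pairwise non-associate irreducible germs, invariance of $S$ means each $\{f_{i}=0\}$ is invariant, hence each prime $f_{i}$ divides the single coefficient of the $2$-form $\omega\wedge df$ in the local ring; since the $f_{i}$ are coprime their product $f$ divides that coefficient as well, which is precisely the factorization $\omega\wedge df=f\,\beta$ used above. This is the only place where the reducedness of $f$ and its possible reducibility play a role, and once it is settled the proposition follows from the displayed identities.
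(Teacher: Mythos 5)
Your proof is correct and follows exactly the paper's argument: simple poles of $\Omega$ from the reducedness of $f$, then the identity $d\Omega=\frac{d\omega}{f}+\frac{\omega\wedge df}{f^2}$ combined with the divisibility $f\mid\omega\wedge df$, which the paper takes as the algebraic definition of invariance. Your closing remark on the reducible case is a harmless elaboration of that same divisibility, not a different route.
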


\begin{proof}
Puisque $f$ est r\'eduite $\Omega$ est \`a p\^oles simples.
Maintenant $d\Big(\frac{\Omega}{f}\Big)=\frac{d\omega}{f}+\frac{\omega\wedge df}{f^2}$ est aussi \`a p\^oles simples puisque $\omega \wedge df$ est divisible par $f$.  
\end{proof}

Le fait pour une $1$-forme d'\^etre logarithmique n'est pas "invariant" par \'eclatement. Par exemple la $1$-forme $\Omega=\frac{x\,dy-y\,dx}{x^4+y^4}$ est logarithmique, mais si on \'eclate l'origine par la transformation quadratique $$\sigma\colon (x,t)\rightarrow (x,tx)$$
alors $\sigma^* \Omega=\frac{dt}{x^2 (1+t^4)}$ est \`a p\^ole double le long du diviseur exceptionnel $x=0$. C'est en fait un avatar de la dicriticit\'e du feuilletage radial associ\'e \`a $x\,dy-y\,dx$.

Par contre dans le cas non dicritique on a la: 

\begin{pro} \label{pro:pro2}
Soient $\mathcal F$ un germe de feuilletage non dicritique et $S$ une courbe invariante par $\mathcal F$. Soient $f=0$ une \'equation r\'eduite de $S$ et $\omega$  une $1$-forme holomorphe d\'efinissant $\mathcal F$. Si $\sigma$ est l'application d'\'eclatement de l'origine, alors la $1$-forme m\'eromorphe $\sigma^{*} \Big(\frac{\omega}{f}\Big)$ est logarithmique.
\end{pro}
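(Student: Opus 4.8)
The plan is to transport everything to the blow-up and then invoke Proposition~\ref{pro:pro1} \emph{upstairs}. Write $\omega=A\,dx+B\,dy$, set $\nu=\nu(\mathcal F)$, and let $\omega_\nu=A_\nu\,dx+B_\nu\,dy$ be its principal (degree $\nu$) part. The first point is that non-dicriticity forces the exceptional divisor $E=\sigma^{-1}(0)$ to be invariant: indeed $E$ is invariant if and only if the radial tangency $g:=xA_\nu+yB_\nu$ (homogeneous of degree $\nu+1$) is not identically zero, and $g\equiv 0$ would make $E$ a dicritical component, hence produce infinitely many separatrices. Consequently, in the standard chart $\sigma^{*}\omega=x^{\nu}\,\widetilde\omega$ with $\widetilde\omega$ holomorphic, $\widetilde\omega$ defines the transform $\widetilde{\mathcal F}=\sigma^{-1}(\mathcal F)$, and $E=\{x=0\}$ is invariant for $\widetilde{\mathcal F}$.

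Next I would set up the invariant divisor upstairs. The strict transform $\widetilde S$ of $S$ is invariant by $\widetilde{\mathcal F}$, and writing $\sigma^{*}f=x^{m}\,\widetilde f$ with $m=\mathrm{mult}_0 S$ and $\widetilde f$ a reduced equation of $\widetilde S$, the curve $E\cup\widetilde S$ is invariant with reduced equation $x\,\widetilde f$. Applying Proposition~\ref{pro:pro1} to $\widetilde{\mathcal F}$ and $E\cup\widetilde S$ at each point of $E$ — legitimate since being logarithmic is a pointwise condition — shows that $\widetilde\Omega:=\widetilde\omega/(x\,\widetilde f)$ is logarithmic. A direct computation then gives
\[
\sigma^{*}\Big(\frac{\omega}{f}\Big)=\frac{x^{\nu}\,\widetilde\omega}{x^{m}\,\widetilde f}=x^{\,\nu+1-m}\,\widetilde\Omega .
\]

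It remains to analyse the factor $x^{\nu+1-m}$. If $m\le\nu+1$ the exponent is $\ge 0$, so $x^{\nu+1-m}$ is holomorphic; multiplying the logarithmic form $\widetilde\Omega$ by a nonnegative power of $x$ keeps the poles simple (along $\widetilde f$ this is clear, and along $E$ the differential stays simple-poled because $dx\wedge\widetilde\Omega$ only feels the $\widetilde f$-pole, as $dx\wedge\frac{dx}{x}=0$), whence $\sigma^{*}(\omega/f)$ is again logarithmic. \emph{The whole proposition therefore reduces to the multiplicity bound $\mathrm{mult}_0 S\le\nu(\mathcal F)+1$}, and this is the step I expect to be the real obstacle. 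It is precisely here that non-dicriticity is indispensable: for the radial foliation $x\,dy-y\,dx$ the curve $x^{4}+y^{4}=0$ (four invariant lines) is invariant with $m=4>\nu+1=2$, and there $\sigma^{*}(\omega/f)$ does acquire a double pole, exactly as in the example preceding the statement.

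To prove $m\le\nu+1$ I would argue on the blow-up. The tangent direction of every separatrix is a zero of $g$ on $E\cong\mathbb P^{1}$ (two invariant curves through a regular point would coincide, so $\widetilde S$ and $E$ can only meet at singular points of $\widetilde{\mathcal F}$, which are the zeros of $g$), and the points of tangency of $\widetilde{\mathcal F}$ with $E$ are these $\nu+1$ roots, so $\sum_{p\in E}\mathrm{tang}_p(\widetilde{\mathcal F},E)=\deg g=\nu+1$; on the other hand $m=\widetilde S\cdot E=\sum_{p\in E\cap\widetilde S}(\widetilde S\cdot E)_p$ since $\sigma^{*}S=\widetilde S+mE$ and $E^{2}=-1$. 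It thus suffices to show $(\widetilde S\cdot E)_p\le\mathrm{tang}_p(\widetilde{\mathcal F},E)$ at each point of $E\cap\widetilde S$. When $\widetilde S$ is transverse to $E$ this is immediate, since $(\widetilde S\cdot E)_p=1$ and $p$ is singular so the tangency is $\ge 1$; the delicate case is a branch of $\widetilde S$ tangent to $E$ (the avatar of a non-reduced tangent cone, as for a cusp), which I would treat by induction on the reduction of singularities: blowing up $p$ makes the contact of the strict transforms with the new exceptional divisor strictly drop while the tangency count stays additive. This induction, with reduced singularities as base case (where every invariant curve has multiplicity $\le 2=\nu+1$), is the technical heart of the argument.
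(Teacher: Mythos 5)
Your proof is correct and, at its core, follows the same route as the paper: everything reduces to the inequality $\nu(f)\le\nu(\omega)+1$ (in your notation $m\le\nu+1$), which is exactly the lemma on which the paper's proof rests. The difference lies in how the two halves are handled. For deducing logarithmicity from the inequality, the paper does a direct computation in the chart $\sigma(x,t)=(x,tx)$, writing $\sigma^{*}(\omega/f)=x^{\nu(\omega)}\widetilde{\omega}/(x^{\nu(f)}\widetilde{f})$ and checking the poles of the form and of its differential by hand, using the invariance of $x=0$; your variant --- apply Proposition \ref{pro:pro1} upstairs to $\widetilde{\mathcal F}$ and the reduced invariant divisor $x\widetilde{f}=0$, then multiply by the holomorphic factor $x^{\nu+1-m}$, observing that $dx\wedge\frac{dx}{x}=0$ --- is equivalent and arguably cleaner for the $d\Omega$ part. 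For the inequality itself, the paper simply cites Camacho--Lins Neto--Sad \cite{3}, whereas you sketch a proof via $\widetilde{S}\cdot E=m$, $\sum_{p}\mathrm{tang}_p(\widetilde{\mathcal F},E)=\deg(xA_\nu+yB_\nu)=\nu+1$ and the pointwise comparison $(\widetilde{S}\cdot E)_p\le\mathrm{tang}_p(\widetilde{\mathcal F},E)$. That comparison in the tangential case is the genuinely delicate point, and your one-line induction on the reduction of singularities is not yet a proof (the tangency count is not simply additive under blow-up; one must track the correction terms coming from the multiplicities of the strict transforms). Since this is precisely the content of the reference the paper invokes, you are entitled to cite it rather than reprove it; if you do wish to reprove it, that induction must be written out in full.
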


\begin{proof}
Elle repose sur l'in\'egalit\'e suivante \cite{3} \'etablie dans le cas non dicritique pr\'ecis\'ement : $\nu(f)\leq \nu(\omega)+1$. La proposition est alors une simple v\'erification que l'on effectue par exemple dans la carte $(x,t)$ o\`u $\sigma(x,t)=(x,tx)$. On a :
$$\sigma^* \Big(\frac{\omega}{f}\Big)=\frac{x^{\nu(\omega)} \widetilde{\omega}}{x^{\nu(f)} \widetilde{f}}$$
avec $\widetilde{\omega}$ et $\widetilde{f}$ holomorphes. L'in\'egalit\'e $\nu(f)\leq \nu(\omega)+1$ implique que $\sigma^{*}(\frac{\omega}{f})$est au pire \`a p\^ole simple le long de $x=0$ ; le comportement de $\sigma^*\Big(\frac{\omega}{f}\Big)$ le long de $\widetilde{f}=0$ est bien entendu le m\^eme que celui de $\omega$ le long de $f=0$. Comme dans le cas non dicritique le diviseur exceptionnel $x=0$ est invariant par le feuilletage $\sigma^{*} \mathcal F$ d\'efini par $\widetilde{\omega}$, la $2$-forme $d\Big(\sigma^*\big(\frac{\omega}{f}\big)\Big)$ est aussi \`a p\^oles au pire simples. 
\end{proof}

\begin{rem} 
Il se peut, et c'est le cas si $\nu(f)\leq \nu(\omega)$, que $\sigma^{*} (\frac{\omega}{f})$ n'ait pas de p\^ole le long du diviseur $x=0$.
\end{rem}

\begin{pro} \label{pro:pro3}
Soit $\mathcal F$un germe de feuilletage non dicritique \`a l'origine de $\mathbb C^2$ donn\'e par la $1$-forme $\omega$. Soit $S=(f=0)$ une courbe invariante par $\mathcal F$, avec $f$ r\'eduite. Soit $\pi : \widetilde{\mathbb C}^2\rightarrow \mathbb C^2,0$ la r\'esolution des singularit\'es de $\mathcal F$. Alors la $1$-forme $\pi^{*}\Big(\frac{\omega}{f}\Big)$ m\'eromorphe sur $\widetilde{\mathbb C}^2$ est logarithmique.
\end{pro}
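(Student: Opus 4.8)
The plan is to factor the resolution $\pi$ as a finite composition of point blow-ups $\pi=\sigma_1\circ\cdots\circ\sigma_N$ (the situation in dimension $2$, by Seidenberg) and to argue by induction on the number of blow-ups, using Proposition \ref{pro:pro2} as the one-step engine. Since being logarithmic is a local condition and $\pi$ is a biholomorphism away from $\pi^{-1}(0)$, where $\omega/f$ is already logarithmic by Proposition \ref{pro:pro1}, everything reduces to a statement along the successive exceptional divisors.

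The inductive invariant I would carry is the following normal form. Writing $\pi_k=\sigma_1\circ\cdots\circ\sigma_k$ and $\mathcal F_k=\pi_k^{*}\mathcal F$, I claim that at every point $p$ of the exceptional locus the germ of $\pi_k^{*}(\omega/f)$ can be written $g\,\eta/f_\ast$, where $\eta$ is a holomorphic $1$-form with isolated singularity defining $\mathcal F_k$ at $p$, where $g$ is holomorphic, and where $f_\ast$ is a reduced equation of the polar divisor, each branch of which is invariant by $\mathcal F_k$ (the strict transform of $S$, and the exceptional components, these being invariant precisely because $\mathcal F$ is non dicritique). Such a form is automatically logarithmic: it has simple poles since $g,\eta$ are holomorphic and $f_\ast$ is reduced, and $d(g\,\eta/f_\ast)$ has simple poles because $\eta\wedge df_\ast$ is divisible by $f_\ast$ (invariance), hence so is $g\,\eta\wedge df_\ast$. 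Once the invariant is established at the top of the resolution, the conclusion is therefore immediate.

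For the inductive step I would blow up a center $q$ and compute in a chart $\sigma(x,t)=(x,tx)$. Pulling back $g\,\eta/f_\ast$ and extracting the powers of $x$ gives $x^{e}\,\widetilde g\,\widetilde\eta/\widetilde f_\ast$ with $e=\nu(g)+\nu(\eta)-\nu(f_\ast)$, where $\widetilde\eta$ defines $\mathcal F_{k+1}$ and has isolated singularity (the exceptional divisor $x=0$ being invariant, by non dicriticit\'e), and $\widetilde f_\ast$ is the reduced strict transform. The one thing to verify is that the polar order along $x=0$ stays $\le 1$, i.e. $e\ge -1$: when $e\ge 0$ the exceptional divisor is not polar and one absorbs $x^{e}$ into the holomorphic factor, while for $e=-1$ one incorporates the invariant divisor $x=0$ into the new reduced equation. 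This reproduces the invariant with $\eta'=\widetilde\eta$, $f_\ast'=x^{\max(0,-e)}\widetilde f_\ast$ and $g'=x^{\max(0,e)}\widetilde g$.

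The crux is thus the inequality $e\ge -1$, equivalently $\nu(f_\ast)\le\nu(\eta)+\nu(g)+1$. As $g$ is holomorphic, $\nu(g)\ge 0$, so it suffices that $\nu(f_\ast)\le\nu(\eta)+1$ at $q$; this is exactly the non dicritique inequality of \cite{3} already used in Proposition \ref{pro:pro2}, applied to the germ $(\mathcal F_k,q)$ and its invariant curve $f_\ast$ (each branch of $f_\ast$ being a separatrice, $\nu(f_\ast)$ is bounded by the multiplicity of the total separatrice, itself $\le\nu(\eta)+1$). The main obstacle is to legitimate this at \emph{every} intermediate center and not only at the origin: I would observe that the tail $\sigma_{k+1}\circ\cdots\circ\sigma_N$ resolves the germ $(\mathcal F_k,q)$, and since by hypothesis every component of $\pi^{-1}(0)$ is invariant (the characterization of non dicriticit\'e recalled above), so are the components produced over $q$; hence $(\mathcal F_k,q)$ is again non dicritique and \cite{3} applies. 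Propagating non dicriticit\'e down the tower in this way is the heart of the matter, the rest being the bookkeeping of multiplicities carried out above.
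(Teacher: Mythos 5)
Your proof follows essentially the same route as the paper, which simply observes that $\pi$ factors into finitely many blow-ups, that non-dicriticity is compatible with blowing up, and that Proposition \ref{pro:pro2} then applies at each stage. Your version merely makes explicit the inductive bookkeeping (the normal form $g\,\eta/f_\ast$, the inequality $\nu(f_\ast)\leq\nu(\eta)+1$ at each intermediate centre, and the propagation of non-dicriticity down the tower) that the paper leaves implicit.
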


\begin{proof}
Comme l'application $\pi$ est une composition finie d'\'eclatements et que la notion de non dicriticit\'e compatible aux \'eclatements, c'est une application directe de la Proposition \ref{pro:pro2}.
\end{proof}

\section{D\'emonstration du th\'eor\`eme 4.}

Le feuilletage $\mathcal F$ est donn\'e en coordonn\'ees homog\`enes $(z_{0} : z_{1} : z_{2})$ par une $1$-forme
$$\omega=A_{0} dz_{0}+ A_{1} dz_{1}+ A_{2}dz_{2}$$
o\`u les $A_{i}$ sont des polyn\^omes homog\`enes de degr\'e $d+1$, $\mathrm{pgcd}\,(A_{0}, A_{1}, A_{2})=1$ satisfaisant l'identit\'e d'Euler : 
$$\sum z_{i} A_{i}=0.$$
La courbe invariante $S$ est elle donn\'ee par un polyn\^ome homog\`ene r\'eduit $f$ de degr\'e $d+2$. La $1$-forme m\'eromorphe $\frac{\omega}{f}$ est donc invariante par les homoth\'eties $z\mapsto t.z$. En utilisant l'identit\'e d'Euler on constate qu'elle d\'efinit une $1$-forme m\'eromorphe $\Omega$ sur $\mathbb P_{\mathbb C}^2$ \`a p\^oles simples le long de $S$. Comme $S$ est invariante par~$\mathcal F$ la forme $\Omega$ est donc logarithmique \`a p\^oles le long de $S$. On consid\`ere la r\'eduction des singularit\'es
$$\pi\colon  \widetilde{\mathbb P^2_{\mathbb C}}\rightarrow \mathbb P^2_{\mathbb C}$$
du feuilletage $\mathcal F$. Comme en chaque point singulier $p\in S$ le feuilletage $\mathcal F_{,p}$ est non dicritique on peut appliquer la Proposition \ref{pro:pro3}; ainsi $\pi^{*} \Omega$ est logarithmique sur $\widetilde{\mathbb P^2}_{\mathbb C}$. Son diviseur de p\^oles est contenu dans le transform\'e total $\pi^{-1} (S)$ de $S$ par $\pi$ (c'est l'union des diviseurs exceptionnels et de la transform\'ee stricte de~$S$). Comme $\pi^{-1} (S)$ est \`a croisements ordinaires, les p\^oles de $\pi^{*} (\Omega)$ le sont aussi et le th\'eor\`eme de Deligne affirme que $\pi^{*} \Omega$ est ferm\'ee; par suite $\Omega$ aussi.

\section{Applications et g\'en\'eralisation}

Comme l'aura not\'e le lecteur on obtient de mani\`ere analogue et directe le : 

\begin{thm} \label{thm:thm1}
Soient $X$ une surface projective et $\omega$ une $1$-forme logarithmique sur $X$. Si les singularit\'es du feuilletage associ\'e \`a $\omega$ situ\'ees sur le diviseur des p\^oles de $\omega$ sont non dicritiques, alors la forme $\omega$ est ferm\'ee.
\end{thm}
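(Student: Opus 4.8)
The plan is to mirror, almost verbatim, the argument used for Theorem~4 in Section~3, replacing the projective plane by $X$ and the homogeneous presentation $\omega/f$ by the given logarithmic form. First I would localize. Near a point $p$ of the pole divisor $D=\mathrm{Pol}\,\omega$ I write $\omega=\eta/g$, where $g$ is a reduced local equation of $D$ and $\eta$ a holomorphic $1$-form defining the germ $\mathcal F_{,p}$ of the associated foliation. Computing $d\omega=d\eta/g+(\eta\wedge dg)/g^2$ exactly as in the proof of Proposition~\ref{pro:pro1}, the hypothesis that $\omega$ is logarithmic (that is, $d\omega$ has at worst simple poles) translates into $\eta\wedge dg$ being divisible by $g$; equivalently, $D$ is invariant by $\mathcal F$ at every one of its points. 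Thus $D$ is an invariant curve of $\mathcal F$, and along $D$ the singularities of $\mathcal F$ are non dicritiques by assumption.

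Next I would pass to a reduction of singularities $\pi\colon\widetilde X\to X$ of $\mathcal F$, chosen (or refined) so that the total transform $\pi^{-1}(D)$ has ordinary crossings; such a $\pi$ is a finite composition of blow-ups. The crucial step is to show that $\pi^*\omega$ is again logarithmic on $\widetilde X$. Over a point $p\in D$ this is precisely Proposition~\ref{pro:pro3} applied to the germ $\mathcal F_{,p}$ and to the invariant curve $D$: non-dicriticity keeps the pole order along each exceptional component simple. Over a point outside $D$ the form $\omega$ is holomorphic, so its pullback is holomorphic and hence trivially logarithmic there; in particular no dicritical singularity lying off $D$ can damage the logarithmic nature of $\pi^*\omega$, since there is no pole to be amplified. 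Consequently $\pi^*\omega$ is logarithmic, and its pole divisor is contained in $\pi^{-1}(D)$, which by construction is a normal crossings divisor.

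Finally I would invoke the theorem of Deligne (Th\'eor\`eme~\ref{thm:deligne}) on the projective surface $\widetilde X$: since $\pi^*\omega$ is logarithmic with ordinary-crossing poles, it is closed, so $\pi^*(d\omega)=d(\pi^*\omega)=0$. As $\pi$ is a birational morphism, it restricts to a biholomorphism over a dense Zariski-open subset of $X$, on which $d\omega$ therefore vanishes; by density $d\omega\equiv 0$ on $X$, so $\omega$ is closed.

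I expect the main obstacle to be the globalization together with the control of the resolution: checking that a single $\pi$ simultaneously reduces $\mathcal F$ along $D$ and renders $\pi^{-1}(D)$ an ordinary-crossings divisor, and that the purely local conclusion of Proposition~\ref{pro:pro3} glues into a genuinely logarithmic form on all of $\widetilde X$. The non-dicriticity hypothesis enters exactly here, through the inequality $\nu(g)\le\nu(\eta)+1$ underlying Proposition~\ref{pro:pro2}; without it the pole order along the exceptional divisors may jump, as the example $\Omega=(x\,dy-y\,dx)/(x^4+y^4)$ of Section~2 illustrates.
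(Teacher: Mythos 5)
Your proposal is correct and follows essentially the same route as the paper, which states this theorem as an immediate analogue of the proof of Theorem~4 (localize the logarithmic form along its pole divisor, pull back under a reduction of singularities using Proposition~\ref{pro:pro3}, and apply Deligne's theorem on the resolved surface before descending). Your extra remarks on points off the pole divisor and on refining the resolution to get ordinary crossings are exactly the verifications the paper leaves implicit.
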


En fait le r\'esultat pr\'ec\'edent se g\'en\'eralise en toute dimension :

\begin{thm}  \label{thm:thm2}
Soient $X \subset \mathbb P^n_{\mathbb C}$ une vari\'et\'e projective et $\omega$ une $1$-forme logarithmique sur $X$. On suppose que dans une famille g\'en\'erique de sections lin\'eaires de dimension $n-\dim X+2$ les hypoth\`eses du Th\'eor\`eme \ref{thm:thm1} sont r\'ealis\'ees. Alors la forme $\omega$ est ferm\'ee.
\end{thm}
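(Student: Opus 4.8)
The plan is to reduce the general statement to the surface case of Theorem~\ref{thm:thm1} by cutting with generic hyperplanes one at a time, the engine being the restriction principle for closedness recalled just after Theorem~\ref{thm:deligne}: a meromorphic $1$-form on a projective variety of dimension at least $3$ is closed as soon as its restriction to a generic hyperplane section is closed. I would argue by induction on $k=\dim X$.

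The base case $k=2$ is immediate: a generic linear section of dimension $n-\dim X+2=n$ is all of $\mathbb P^n_{\mathbb C}$, so the section is $X$ itself and the hypothesis is precisely that of Theorem~\ref{thm:thm1}, which already yields that $\omega$ is closed. For the inductive step I assume $k\geq 3$ and the result in dimension $k-1$. Choosing a generic hyperplane $H$, Bertini's theorem ensures that $Y=X\cap H\subset H\cong\mathbb P^{n-1}_{\mathbb C}$ is again a projective variety, now of dimension $k-1$, meeting the pole divisor $D$ of $\omega$ transversally; the restriction $\omega|_Y$ is therefore still logarithmic, with poles along $D\cap H$. A standard incidence argument shows that for generic $H$ the linear sections of $Y$ of dimension $(n-1)-(k-1)+2=n-k+2$ remain generic among those of $X$, so the hypotheses of the theorem descend to $Y$; the induction hypothesis then gives that $\omega|_Y$ is closed. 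Thus $d\omega$ restricts to $0$ on $Y=X\cap H$ for every generic $H$.

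It remains to deduce $d\omega=0$ from the vanishing of its restriction to generic hyperplane sections, which is exactly the restriction principle and the single place where $k\geq 3$ intervenes. I would check it pointwise. At a generic point $p\in X\smallsetminus D$, choose generic hyperplanes $H$ through $p$ for which $\omega|_{X\cap H}$ is closed (a dense family, by the inductive step); then the tangent space $T_pY=T_pX\cap T_pH$ is a hyperplane of $T_pX$ on which the $2$-covector $(d\omega)_p$ restricts to zero. As $H$ varies, $T_pH$ sweeps out all hyperplanes of $T_p\mathbb P^n_{\mathbb C}$, so $T_pX\cap T_pH$ sweeps out all hyperplanes of the $k$-dimensional space $T_pX$; since a $2$-covector on a vector space of dimension at least $3$ that vanishes on every hyperplane is itself zero, we get $(d\omega)_p=0$. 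As $p$ is generic this forces $d\omega\equiv 0$ on $X\smallsetminus D$, and therefore $d\omega=0$ as a meromorphic form.

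The formal induction, the Bertini transversality keeping $\omega|_Y$ logarithmic, and the descent of the hypotheses are all routine. The one point deserving genuine care—and the crux of the whole argument—is the restriction principle itself, specifically the claim that generic hyperplanes through a fixed generic point produce a dense family of tangent hyperplanes in $T_pX$, which is what makes the elementary vanishing lemma for $2$-covectors applicable; the remainder is, as the author indicates, entirely analogous to the two-dimensional situation.
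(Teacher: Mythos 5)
Votre démonstration est correcte et suit essentiellement la même voie que celle que l'article sous-entend (il n'écrit d'ailleurs aucune preuve) : réduire au cas des surfaces via des sections linéaires génériques de dimension $n-\dim X+2$, y appliquer le Théorème \ref{thm:thm1}, puis invoquer le principe de restriction rappelé juste après le Théorème \ref{thm:deligne}. Votre récurrence hyperplan par hyperplan et l'argument ponctuel sur les $2$-covecteurs ne font qu'expliciter ce principe ; c'est un complément utile mais pas une approche différente.
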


Dans l'esprit du Th\'eor\`eme  \ref{thm:thm2} nous avons le :

\begin{cor} 
Soit $\mathcal F$ un feuilletage de codimension 1 sur l'espace $\mathbb P^{n}_{\mathbb C}, n\geq 2$. Si dans une section 2-plane g\'en\'erale $i : \mathbb P^2_{\mathbb C} \rightarrow \mathbb P^{n}_{\mathbb C}$ le feuilletage restreint $i^{*} \mathcal F$ satisfait les hypoth\`eses du Th\'eor\`eme \ref{thm:thm1} alors $\mathcal F$ est d\'efini par une forme ferm\'ee logarithmique. En particulier $\mathcal F$ poss\`ede une hypersurface invariante de degr\'e $\deg \mathcal F+~2$.
\end{cor}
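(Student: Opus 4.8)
The plan is to funnel everything into Theorem~\ref{thm:thm2}, whose whole force is that closedness can be tested on generic $2$-dimensional linear sections. Suppose for a moment that $\mathcal F$ is already defined by a \emph{logarithmic} $1$-form $\Omega$ on $\mathbb P^n_{\mathbb C}$; since $\dim\mathbb P^n_{\mathbb C}=n$ the sections relevant to Theorem~\ref{thm:thm2} have dimension $n-\dim X+2=2$, and the standing hypothesis says precisely that on these generic $2$-planes the hypotheses of Theorem~\ref{thm:thm1} hold. Theorem~\ref{thm:thm2} then gives at once that $\Omega$ is closed, and we are done. So the real task is to manufacture $\Omega$, i.e.\ to exhibit an invariant hypersurface $S=(f=0)\subset\mathbb P^n_{\mathbb C}$ of degree $d+2$, where $d=\deg\mathcal F$. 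Writing $\mathcal F$ in homogeneous coordinates by $\omega=\sum_i A_i\,dz_i$ with $\deg A_i=d+1$ and taking $f$ reduced of degree $d+2$, the Euler relation $\sum_i z_iA_i=0$ lets $\omega/f$ descend to a meromorphic $1$-form $\Omega$ on $\mathbb P^n_{\mathbb C}$ with simple poles along $S$; the invariance of $S$ then makes $\Omega$ logarithmic, exactly as in Proposition~\ref{pro:pro1} and in the descent of Section~3.

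To build $S$ I would induct on $n$, the case $n=2$ reducing to Theorem~\ref{thm:thm1}. Two remarks fix the degree in a generic $2$-plane $\Pi$. First, $\deg i^*\mathcal F=\deg\mathcal F=d$. Second, by Theorem~\ref{thm:thm1} the hypothesis forces $i^*\mathcal F$ to be defined by a \emph{closed} logarithmic form, which by the structure of such forms on $\mathbb P^2_{\mathbb C}$ is $\Omega_\Pi=\sum_k\lambda_k\,dg_k/g_k$; clearing denominators presents $i^*\mathcal F$ by a homogeneous $1$-form of coefficient-degree $\big(\sum_k\deg g_k\big)-1$, and since a degree-$d$ foliation on $\mathbb P^2_{\mathbb C}$ is given by coefficients of degree $d+1$ this yields $\sum_k\deg g_k=d+2$. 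Thus the polar divisor $S_\Pi=\big(\prod_k g_k=0\big)$ is invariant of degree exactly $d+2$; as the bound $\deg\le d+2$ (Theorem~\ref{thm:inegalite} and its higher-dimensional analogue recalled in the introduction) is already saturated, $S_\Pi$ is the whole algebraic invariant set of $i^*\mathcal F$, hence canonical. For $n\ge3$ I restrict $\mathcal F$ to a generic hyperplane $H\cong\mathbb P^{n-1}_{\mathbb C}$: a generic $2$-plane of $H$ is a generic $2$-plane of $\mathbb P^n_{\mathbb C}$, so $\mathcal F|_H$ inherits the hypothesis and, by induction, carries a canonical invariant hypersurface $S_H\subset H$ of degree $d+2$.

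To descend from sections to $\mathbb P^n_{\mathbb C}$ I would invoke a Lefschetz-type principle: for $n\ge3$ and $H$ generic, every algebraic invariant hypersurface of $\mathcal F|_H$ is the trace on $H$ of an algebraic invariant hypersurface of $\mathcal F$. Applied to the canonical $S_H$ this yields an invariant hypersurface $S\subset\mathbb P^n_{\mathbb C}$ with $S\cap H=S_H$ for generic $H$, whence $\deg S=\deg S_H=d+2$. With $S$ in hand, the first paragraph produces the logarithmic form $\Omega=\omega/f$, Theorem~\ref{thm:thm2} makes it closed, and $S$ is the announced invariant hypersurface of degree $\deg\mathcal F+2$, which also gives the final assertion of the corollary.

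The crux is this passage from sections to ambient space. The two-dimensional theory only delivers invariant curves \emph{inside} generic planes, and a priori nothing guarantees that they are cut out by a single global hypersurface; the argument hinges on the canonicity of the maximal invariant set in each section, which is what forces the traces to be mutually compatible and to descend. Concretely one must control the restriction map $H^0(\mathbb P^n_{\mathbb C},\mathcal O(d+2))\to H^0(H,\mathcal O(d+2))$ jointly with the invariance condition, i.e.\ prove a Noether--Lefschetz statement for invariant divisors of $\mathcal F$; here the saturation of the degree bound at $d+2$ is indispensable, since otherwise competing invariant curves could break the compatibility. Once this principle is in place, the Euler descent and the appeal to Theorem~\ref{thm:thm2} are purely formal.
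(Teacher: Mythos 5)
The paper states this corollary without proof, presenting it as something obtained ``dans l'esprit du Th\'eor\`eme \ref{thm:thm2}'', so there is no official argument to compare yours against; your architecture --- restrict to a generic $2$-plane, apply Theorem~\ref{thm:thm1} there to get a closed logarithmic form whose polar divisor is an invariant curve of degree exactly $d+2$, extend that divisor to an invariant hypersurface $S\subset\mathbb P^n_{\mathbb C}$, then divide the homogeneous $1$-form by a reduced equation of $S$ and conclude closedness from Theorem~\ref{thm:thm2} --- is surely the intended route. The formal parts are fine: the polar divisor of a logarithmic form is automatically invariant by the associated foliation, and the degree count $\sum_k\deg g_k=d+2$ is correct (granting that clearing denominators yields coefficients without a common factor).

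The genuine gap is the one you yourself isolate and then do not close: the ``Lefschetz-type principle'' asserting that the invariant curve $S_\Pi\subset\Pi$ of a generic $2$-plane (or the hypersurface $S_H$ of a generic hyperplane) is the trace of a global algebraic invariant hypersurface of $\mathcal F$. Nothing you prove implies this. A priori the union $\bigcup_\Pi S_\Pi$ over the Grassmannian could be Zariski-dense in $\mathbb P^n_{\mathbb C}$; the branches $g_{k,\Pi}$ may be permuted by the monodromy of the family of planes; and the ``canonicity'' you lean on is itself shaky, both because Theorem~\ref{thm:inegalite} only bounds the degree of invariant curves on which the singularities are non-dicritical (the hypothesis controls only those on the polar divisor of the given form), and because $i^*\mathcal F$ could admit infinitely many invariant curves of degree $d+2$ (e.g.\ if it has a rational first integral), in which case there is no distinguished $S_\Pi$. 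What actually makes the extension work is the full closed logarithmic structure $\Omega_\Pi=\sum_k\lambda_k\,dg_k/g_k$ delivered by Theorem~\ref{thm:thm1}: constancy of the residues over the connected open set of good planes and an argument gluing the local branches of the polar divisors into an algebraic hypersurface. This is a known but nontrivial extension lemma of Cerveau--Lins Neto type for foliations whose generic plane sections are defined by closed rational $1$-forms; as written you invoke it as a black box, relabel it a ``Noether--Lefschetz statement'', and leave it unproved, so the argument is incomplete precisely at the only step that is not formal.
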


\section{Compl\'ements}

Consid\'erons sur $\mathbb P^2_{\mathbb C}$ le feuilletage $\mathcal F$ donn\'e en carte affine $(z_1, z_2)$ par la $1$-forme :
$$\omega=z_1 dz_2-z_2dz_1+z_1 z_2(z_2-z_1)\left(\alpha \frac{dz_1}{z_1}+ \beta \frac{dz_2}{z_2}+ \gamma \frac{d(z_2-z_1)}{z_2-z_1}\right)$$
 o\`u les $\alpha$, $\beta$, $\gamma$ sont des constantes complexes.
 
 C'est un feuilletage de degr\'e $2$, ayant une singularit\'e dicritique en l'origine. Il poss\`ede les droites invariantes $z_1=0$, $z_2=0$, $z_1=z_2$ et la droite \`a l'infini (tout du moins lorsque $\alpha+\beta+\gamma \neq 0$) et par cons\'equent une s\'eparatrice r\'eduite de degr\'e $4$. On d\'emontre facilement par des arguments holonomiques que pour $\alpha$, $\beta$, $\gamma$ g\'en\'eriques $\mathcal F$ n'est pas donn\'e par une $1$-forme ferm\'ee. On note aussi que $\frac{\omega}{z_1 z_2(z_2-z_1)}$ d\'efinit sur $\mathbb P^2_{\mathbb C}$ une $1$-forme logarithmique \`a p\^oles le long des $4$ droites ci-dessus. Par contre si l'on \'eclate l'origine, la forme \'eclat\'ee n'est pas logarithmique le long du diviseur exceptionnel.

Dans leur \'etude des feuilletages modulaires de Hilbert \cite{9}, Mendes et Pereira donnent l'exemple d'un feuilletage quadratique de $\mathbb P^2_{\mathbb C}$, non d\'efini par une forme ferm\'ee, et poss\'edant une courbe invariante irr\'eductible de degr\'e $S=\deg \mathcal F+3$. Ce feuilletage est "transversalement projectif" et  "non transversalement affine". Le lecteur int\'eress\'e pourra consulter l'article \cite{8} de Lins Neto o\`u l'auteur examine des familles de feuilletages de petit degr\'e sur $\mathbb P^2_{\mathbb C}$ ayant  des courbes invariantes de degr\'e grand. Dans l'esprit de cet article on peut se demander si un feuilletage $\mathcal F$ de $\mathbb P^2_{\mathbb C}$ poss\`edant une courbe invariante de degr\'e "tr\`es grand" relativement \`a celui de $\mathcal F$ est transversalement projectif.

Terminons par les remarques suivantes ; si un feuilletage $\mathcal F$ de $\mathbb P^2_{\mathbb C}$ a une courbe invariante de $S$ de degr\'e pr\'ecis\'ement $\deg \mathcal F+2$ les singularit\'es de $\mathcal F$ sur $S$ \'etant non dicritiques alors $S$ a au moins 3 composantes irr\'eductibles.

Dans le m\^eme ordre d'id\'ee soient $\mathcal F$ un feuilletage de degr\'e $d$ sur $\mathbb P^{n}_{\mathbb C}$ et $H$ une hypersurface invariante de $\mathcal F$. Si les singularit\'es de $H$ sont de codimension sup\'erieure o\`u \'egale \`a $3$ alors degr\'e $H\leq d+1$. En effet si $\omega$ est une $1$-forme homog\`ene d\'efinissant $\mathcal F$ et $h$ un  polyn\^ome homog\`ene irr\'eductible tel que $H=(h=0)$ alors $\omega=a\,dh+h \eta$ avec $a\in \mathcal O(\mathbb C^{n+1})$ et $\eta \in \Omega^1 (\mathbb C^{n+1})$ homog\`enes ; c'est une cons\'equence du lemme de division de Rham-Saito. L'identit\'e d'Euler $i_{R}\omega=0$, o\`u $R$ d\'esigne le champ radial $R=\displaystyle\mathop{\Sigma}^{n}_{i=0} z_{i} \frac{\partial}{\partial z_{i}}$, implique alors que :
$$h \Big((\deg h) a+i_{R} \eta\Big)=0$$
en particulier $\deg a\geq 1$ et $\deg H\leq d+1$. Dans le cas extr\'emal o\`u l'in\'egalit\'e est une \'egalit\'e, $\deg H=d+1$, on constate que $\eta=-\delta\, da$ o\`u $\delta=\deg h$, de sorte que $h/a^\delta$ est une int\'egrale premi\`ere rationnelle de $\mathcal F$. Notons que dans la carte affine $a=1$ le feuilletage $\mathcal F$ a une int\'egrale premi\`ere polynomiale. Remarquons que dans ce cas le feuilletage $\mathcal F$ a des singularit\'es dicritiques le long de $H$.

\bigskip

Une pens\'ee pour Marco Brunella qui s'est beaucoup int\'eress\'e \`a ce type de probl\`emes. Je tiens \`a  remercier Julie D\'eserti pour son aide constante et d\'esint\'eress\'ee.

\end{document}